\newcommand{\bA}{\mathbb{A}}
\newcommand{\sB}{\mathcal{B}}
\newcommand{\sH}{\mathcal{H}}
\newcommand{\sHo}{\mathcal{H}_{0}}
\newcommand{\sK}{\mathcal{K}}
\newcommand{\cstar}{C^*}
\newcommand{\ds}{(A, G, \sigma)}
\newcommand{\cp}{A\rtimes_{\sigma} G}
\theoremstyle{plain}
\newtheorem{thm}{Theorem}
\newtheorem{prop}[thm]{Proposition}
\newtheorem{lemma}[thm]{Lemma}
\newtheorem{cor}[thm]{Corollary}
\theoremstyle{remark}
\begin{document}
\title[The dual structure of systems with finite groups] {The dual structure of crossed product $C^*$-algebras with finite groups}
\author{Firuz Kamalov}
\subjclass[2010]{46L55, 46L05}
\address{Mathematics Department, Canadian University of Dubai, Dubai, UAE}
\email{firuz@cud.ac.ae}
\date{\today}

\begin{abstract}
We study the space of irreducible representations of a crossed product $C^*$-algebra $\cp$, where $G$ is a finite group. We construct a space $\widetilde{\Gamma}$ which consists of pairs of irreducible representations of $A$ and irreducible projective representations of subgroups of $G$. We show that there is a natural action of $G$ on $\widetilde{\Gamma}$ and that the orbit space $G\backslash \widetilde{\Gamma}$ corresponds bijectively to the dual of $\cp$.

\end{abstract}
\maketitle

\section{Introduction}
Let $A$ be a $\cstar$-algebra and let $G$ be a locally compact group acting as automorphisms of $A$ via a homomorphism $\sigma$ into Aut$(A)$. It has been a long standing problem to describe the ideal structure of the crossed product $\cp$. One approach to describing Prim($\cp$) is to construct a set $X$ whose structure can be understood and then realize Prim($\cp$) as the quotient space of $X$. Perhaps the best example of such approach is given by Williams in \cite{W2}, where $A$ and $G$ are assumed to be abelian. In this case, Prim($\cp$) can be realized as the quotient space of $X=\widehat{A}\times \widehat{G}$. In general, the problem of constructing the appropriate space $X$ seems to be very difficult. Even in special cases when $A$ is Type I or $G$ is amenable the problem remains open \cite{W1}.

The purpose of this paper is to describe the dual space $\widehat{\cp}$ of $\cp$, that is, the set of all unitary equivalence classes of irreducible representations of $\cp$, when $G$ is finite. The study of crossed products involving finite groups goes back to Rieffel \cite{R}. More recently, it was shown by Arias and Latremoliere in \cite{Ar} that every irreducible representation of $\cp$ is induced from an irreducible representation of a certain subsystem. In Section 2, we construct a space $\widetilde{\Gamma}$ which consists of pairs of unitary equivalence classes of irreducible representations of $A$ and irreducible projective representations of certain subgroups of $G$. There is a natural action of $G$ on $\widetilde{\Gamma}$. We define a map $\Phi$ from $\widetilde{\Gamma}$ into the set of equivalence classes of irreducible covariant representations of the dynamical system $\ds$. In Section 3, we show that the map $\Phi$ is surjective. This result is also proved in \cite[Theorem 3.4]{Ar} but we provide an alternative approach. Our main result is Theorem~\ref{thm1}, where we identify $\widehat{\cp}$ with the set of orbits in $\widetilde{\Gamma}$.

Recall that a covariant representation of $\ds$ on a Hilbert space $\sH$ is a pair $(\pi, U)$, where $\pi$ is a non-degenerate representation of $A$ on $\sH$ and $U$ is a homomorphism of $G$ into the unitary group of $\sB
(\sH)$ such that
\[U(s)\pi(a)U(s)^*=\pi(\sigma_{s}a)\]
for all $a\in A$ and $s\in G$. There exists a one to one correspondence between the covariant representations of the system $\ds$ and the nondegenerate representations of $\cp$. Therefore, the study of
representations of $\cp$ is equivalent to that of
covariant representations of $\ds$.

\section{The Action of $G$ on $\Gamma$}
Let $\ds$ be a dynamical system, where $G$ is a finite group. The action of $G$ on $A$ induces a natural action of $G$ on $\widehat{A}$ given by $[\pi] \mapsto [\pi\circ \sigma_s]$ for all $[\pi]\in \widehat{A}$ and $s\in G$. Define $G_{\pi} = \{ s\in G : [\pi]=[\pi\circ\sigma_s]\}$ to be the stability group for each $[\pi]\in \widehat{A}$. Then for each $s\in G_{\pi}$ there is a unitary $V_s$ such that $V_s \pi V_s^* = \pi\circ \sigma_s$. A routine calculation shows that the map $s \mapsto V_s$ defines a projective representation of $G_{\pi}$. Let $\omega$ be the multiplier of the projective representation $V$. The multiplier $\omega$ and the projective representation $V$ do not depend on the choice of $\pi$ but only on the equivalence class $[\pi]$. Let $W_\omega$ be an $\omega$-representation of $G_{\pi}$, then according to \cite{M}, $\overline{W_\omega}$, the adjoint of $W_\omega$, is an $\omega^{-1}$-representation. We can construct a covariant representation of $(A, G_{\pi}, \sigma)$ by
\begin{equation}\label{eq:fingr}
\pi_{\omega}=\pi\otimes 1  \mbox{ and } U_{\omega}=V\otimes \overline{W_\omega}.
\end{equation}
The map $W_\omega \mapsto (\pi_{\omega},U_{\omega})$ sets up a one-to-one correspondence between the set of $\omega$-representations of $G_{\pi}$ and the set of all covariant representations of $(A, G_{\pi}, \sigma)$ of the form $(\pi\otimes 1, V\otimes \overline{W_\omega})$. Moreover, the commutant of $(\pi_{\omega},U_{\omega})$ is isomorphic to the commutant of $W_\omega$ under the canonical correspondence \cite[Lemma 5.2]{T}. In particular, if $W_\omega$ is irreducible, then so is $(\pi_{\omega},U_{\omega})$.

Let $\Gamma$ be the set of all pairs $(\pi, W_{\omega})$, where $\pi$ is an irreducible representation of $A$ and $W_{\omega}$ is an irreducible $\omega$-representation of $G_{\pi}$. There exists a natural action of $G$ on the set $\Gamma$ which we now describe. For each $s\in G$, we have  $G_{\pi\circ\sigma_s}=s^{-1}G_{\pi}s$. So given a projective representation $W_{\omega}$ of $G_{\pi}$ we can construct a projective representation of $G_{\pi\circ\sigma_s}$ by $(s\cdot W_{\omega})(s^{-1}ts)=W_{\omega}(t)$ for all $t\in G_{\pi}$. Thus we can define the action of $G$ on $\Gamma$ by
\[(\pi, W_{\omega})\mapsto (\pi\circ\sigma_s, s \cdot W_{\omega}).\]

In order to establish a connection between $\Gamma$ and $\widehat{\cp}$  we need to extend a representation of $(A, G_{\pi}, \sigma)$ to a representation of $\ds$. We will use Mackey-Takesaki construction of induced representations for this purpose. Since we are working with a finite group $G$ induced representations are easy to describe. Let $H$ be a subgroup of $G$ and let $(\pi, U)$ be a covariant representation of $(A, H, \sigma)$ on a Hilbert space $\sH_0$. Let $\sH$ be the space of all $\sHo$ valued functions $\xi$ on $G$ satisfying $\xi(ts)=U(t)\xi(s)$ for all $t\in H$ and all $s\in G.$ Define $\overline{U}$ to be the homomorphism of $G$ into the unitary group of $\sB (\sH)$ given by
\[(\overline{U}(t)\xi)(s)=\xi(st)\] for all
$\xi\in\sH$ and $s, t\in G$. For each $a\in A$, define an operator
$\overline{\pi}(a)$ on $\sH$ by
\[(\overline{\pi}(a)\xi)(s)=\pi(\sigma_{s}a)\xi(s)\]
for all $\xi\in\sH$ and $s\in G$. Then $(\overline{\pi}, \overline{U})$ is the induced covariant representation of $\ds$.

Let $H$ be a subgroup of $G$ and let $(\pi, U)$ be a representation of  $(A, H, \sigma)$. Let $s\in G$. Define a representation $(\pi\circ\sigma_s, U_s)$ of $(A, s^{-1}Hs, \sigma)$ by $U_s(s^{-1}ts)=U(t)$ for all $t\in H$. We want to establish that $(\pi, U)$ and $(\pi\circ\sigma_s, U_s)$ induce to equivalent representations.


\begin{lemma}\label{Lem0}
 Let $\ds$ be a dynamical system, where $G$ is a finite group. Let $H$ be a subgroup of $G$ and $s\in G$. Suppose that $(\pi, U)$ and $(\pi\circ\sigma_s, U_s)$ are as above and that $(\overline{\pi}, \overline{U})$ and $(\overline{\pi\circ\sigma_s}, \overline{U_s})$ are the corresponding induced representations of $\ds$. Then $(\overline{\pi}, \overline{U})$ is unitarily equivalent to $(\overline{\pi\circ\sigma_s}, \overline{U_s})$.
\end{lemma}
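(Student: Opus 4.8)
The plan is to exhibit an explicit unitary intertwiner between the two induced representations. Write $K=s^{-1}Hs$, so that $(\pi\circ\sigma_s, U_s)$ is a covariant representation of $(A, K, \sigma)$, and let $\sH$ and $\sH'$ denote the Hilbert spaces carrying $(\overline{\pi},\overline{U})$ and $(\overline{\pi\circ\sigma_s},\overline{U_s})$ respectively. Thus $\sH$ consists of $\sHo$-valued functions $\xi$ on $G$ with $\xi(tg)=U(t)\xi(g)$ for $t\in H$, while $\sH'$ consists of functions $\eta$ with $\eta(rg)=U_s(r)\eta(g)$ for $r\in K$. The candidate intertwiner is the left-translation map $T\colon \sH\to\sH'$ defined by $(T\xi)(g)=\xi(sg)$ for all $g\in G$.

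First I would check that $T$ is well-defined, that is, that $T\xi$ satisfies the $K$-covariance condition. This is the heart of the argument and the only place where the defining relation $U_s(s^{-1}ts)=U(t)$ enters. Writing an element of $K$ as $r=s^{-1}ts$ with $t\in H$, one has $sr=ts$, so that $(T\xi)(rg)=\xi(srg)=\xi(tsg)=U(t)\xi(sg)=U_s(r)(T\xi)(g)$, as required. Conversely the map $(S\eta)(g)=\eta(s^{-1}g)$ sends $\sH'$ into $\sH$ and is a two-sided inverse of $T$, so $T$ is a bijection.

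Next I would verify that $T$ is unitary. The norm of a function in $\sH$ is constant on each right coset $Hg$, and the inner product is obtained by summing $\langle\xi(g),\xi'(g)\rangle_{\sHo}$ over $H\bs G$, with the analogous description over $K\bs G$ for $\sH'$. Unitarity therefore reduces to the observation that $Kg\mapsto Hsg$ is a well-defined bijection $K\bs G\to H\bs G$, which follows from the identity $sKg=Hsg$. Finally I would check the two intertwining identities directly: $(T\,\overline{U}(x)\xi)(g)=\xi(sgx)=(\overline{U_s}(x)\,T\xi)(g)$ for the unitaries, and $(T\,\overline{\pi}(a)\xi)(g)=\pi(\sigma_{sg}a)\xi(sg)=(\overline{\pi\circ\sigma_s}(a)\,T\xi)(g)$ for the algebra action, using $\sigma_s\sigma_g=\sigma_{sg}$. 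Together these establish that $T$ implements the desired unitary equivalence.

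The only genuine difficulty is bookkeeping with the left/right conventions: the covariance defining $\sH$ is a left $H$-action, the translation $\overline{U}$ is a right $G$-action, and the conjugation $K=s^{-1}Hs$ forces the intertwiner to be left translation by $s$. The content of the lemma is precisely that left translation by $s$ is simultaneously compatible with all three of these; once the conventions are pinned down, every verification above is a one-line computation.
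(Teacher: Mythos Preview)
Your proof is correct and follows exactly the same approach as the paper: both use the left-translation map $(T\xi)(g)=\xi(sg)$ as the intertwining unitary and verify the two intertwining relations by the same one-line computations. You are more careful than the paper in that you explicitly check the $K$-covariance (well-definedness of $T$) and the unitarity via the coset bijection $Kg\mapsto Hsg$, whereas the paper simply declares $V$ to be a unitary from $\sH$ to $\sH_s$; otherwise the arguments are identical.
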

\begin{proof}
Let $\sH$ denote the representation space for $(\overline{\pi}, \overline{U})$ and $\sH_s$ denote the representation space for $(\overline{\pi\circ\sigma_s}, \overline{U_s})$. Define a unitary $V$ from $\sH$ to $\sH_s$ by $(V\xi)(r)=\xi(sr)$ for all $\xi\in\sH$ and $r\in G$. For each $\eta\in \sH_s$,
\begin{align*}
(V\overline{\pi}(a)V^*\eta)(r)\\
&= (\overline{\pi}(a)V^*\eta)(sr)\\
&= \pi(\sigma_{sr}a)(V^*\eta)(sr)\\
&= \pi(\sigma_{sr}a)\eta(r)
= (\overline{\pi\circ \sigma_{s}}(a)\eta)(r)
\end{align*} for all $r\in G$ and $a\in A$. Similarly,
 \[(V\overline{U}(t)V^*\eta)(r)=\eta(rt)=(\overline{U_s}(t)\eta)(r)\]
 for all $t, r\in G$.  It follows that $(\overline{\pi}, \overline{U})$ is equivalent to $(\overline{\pi\circ\sigma_s}, \overline{U_s})$ via the unitary $V$.
\end{proof}

Let $(\pi_\omega, U_{\omega})$ be a representation of $(A, G_\pi, \sigma)$ as in Equation~\ref{eq:fingr}. For each representation of the form $(\pi_{\omega},U_{\omega})$, we can induce to a representation $(\overline{\pi_{\omega}},\overline{U_{\omega}})$ of $\ds$. The commutant of $(\pi_{\omega},U_{\omega})$ is isomorphic to the commutant of $(\overline{\pi_{\omega}},\overline{U_{\omega}})$. In particular, if $(\pi_{\omega},U_{\omega})$ is irreducible, then so is $(\overline{\pi_{\omega}},\overline{U_{\omega}})$.
Let $(\pi_1, W_{\omega_1})$ and $(\pi_2, W_{\omega_2})\in \Gamma$. We will say that $(\pi_1, W_{\omega_1})$ is equivalent to ($\pi_2, W_{\omega_2})$ if $\pi_1$ is unitarily equivalent to $\pi_2$ and $W_{\omega_1}$ is unitarily equivalent to $W_{\omega_2}$. Let $\widetilde{\Gamma}$ be the set of all equivalence classes in $\Gamma$. Note that the action of $G$ on $\Gamma$ induces the action of $G$ on $\widetilde{\Gamma}$.

\begin{lemma}\label{Lem1}
Let $\ds$ be a dynamical system, where $G$ is a finite group. Let $(\pi_1, W_{\omega_1}), (\pi_2, W_{\omega_2})\in \Gamma$ and let $(\overline{\pi_{\omega_1}},\overline{U_{\omega_1}}),(\overline{\pi_{\omega_2}},\overline{U_{\omega_2}})$ be the corresponding representations of $\ds$. If $(\overline{\pi_{\omega_1}},\overline{U_{\omega_1}})$ is unitarily equivalent to $(\overline{\pi_{\omega_2}},\overline{U_{\omega_2}})$,
then
$(\pi_1, W_{\omega_1})$ is equivalent to $(\pi_2\circ\sigma_s, s\cdot W_{\omega_2})$ for some $s\in G$.
\end{lemma}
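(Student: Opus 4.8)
The plan is to reduce the statement to a Schur-type argument carried out on a single isotypic component of the induced representations. The starting observation is the coset description of the induced space: writing $H_i=G_{\pi_i}$, a function $\xi$ in the space of $(\overline{\pi_{\omega_i}},\overline{U_{\omega_i}})$ is determined by its values on a transversal $\{s_1=e,s_2,\dots\}$ for the left cosets $H_i\bs G$, and under $\xi\mapsto(\xi(s_j))_j$ the operator $\overline{\pi_{\omega_i}}(a)$ acts on the summand indexed by $H_is_j$ as $(\pi_i\circ\sigma_{s_j})\otimes 1$. Since $\pi_i$ is irreducible and $H_i$ is its full stabilizer, the classes $[\pi_i\circ\sigma_{s_j}]$ are pairwise inequivalent for distinct cosets, so the $A$-spectrum of $\overline{\pi_{\omega_i}}$ is exactly the orbit $G\cdot[\pi_i]$, and the $[\pi_i]$-isotypic subspace is precisely the $e$-coset summand.

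First I would restrict the assumed equivalence $(\overline{\pi_{\omega_1}},\overline{U_{\omega_1}})\cong(\overline{\pi_{\omega_2}},\overline{U_{\omega_2}})$ to $A$. Equivalent representations of $A$ have the same spectrum, so $G\cdot[\pi_1]=G\cdot[\pi_2]$ and hence $[\pi_1]=[\pi_2\circ\sigma_s]$ for some $s\in G$. By Lemma~\ref{Lem0}, together with the definition of the $G$-action on $\Gamma$, the induced representation is unchanged up to equivalence when $(\pi_2,W_{\omega_2})$ is replaced by $(\pi_2\circ\sigma_s,s\cdot W_{\omega_2})$. After conjugating by a unitary realizing $\pi_1\cong\pi_2\circ\sigma_s$, I may therefore assume outright that $\pi_1=\pi_2=\pi$, that $H_1=H_2=G_\pi=H$, and that $W_{\omega_1},W_{\omega_2}$ are irreducible $\omega$-representations of $H$ for the same multiplier $\omega$ and the same canonical projective representation $V$ (both depending only on $[\pi]$). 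The remaining goal is to prove $W_{\omega_1}\cong W_{\omega_2}$, which will give $(\pi_1,W_{\omega_1})$ equivalent to $(\pi_2\circ\sigma_s,s\cdot W_{\omega_2})$ as required.

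The heart of the argument is to recover the $W_{\omega_i}$ from the $e$-coset summand $\sH_{e,i}$, on which $\overline{\pi_{\omega_i}}$ acts as $\pi\otimes 1$. A short computation with $(\overline{U}(t)\xi)(s)=\xi(st)$ shows that for $t\in H$ the operator $\overline{U_{\omega_i}}(t)$ preserves $\sH_{e,i}$ and that, under evaluation at $e$, the restricted pair is carried to the covariant representation $(\pi\otimes 1,V\otimes\overline{W_{\omega_i}})$ of $(A,H,\sigma)$. Now an intertwiner $T$ for the $A$-action preserves isotypic subspaces, so it restricts to a unitary $T_0\colon\sH_{e,1}\to\sH_{e,2}$ intertwining these two covariant representations. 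Irreducibility of $\pi$ forces $T_0=1\otimes S$, and intertwining the $H$-action gives $V(t)\otimes S\overline{W_{\omega_1}(t)}=V(t)\otimes\overline{W_{\omega_2}(t)}S$; cancelling the nonzero factor $V(t)$ yields $S\overline{W_{\omega_1}(t)}=\overline{W_{\omega_2}(t)}S$ for all $t\in H$. Since $S$ is a nonzero (indeed unitary) intertwiner of the irreducible $\omega^{-1}$-representations $\overline{W_{\omega_i}}$, Schur's lemma gives $\overline{W_{\omega_1}}\cong\overline{W_{\omega_2}}$ and hence $W_{\omega_1}\cong W_{\omega_2}$.

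I expect the main obstacle to be the bookkeeping of the third paragraph: verifying that the $[\pi]$-isotypic subspace is exactly the $e$-coset summand (which uses crucially that $H$ is the full stabilizer, so the other summands contribute inequivalent classes) and that $\overline{U_{\omega_i}}|_H$ restricts to it as $V\otimes\overline{W_{\omega_i}}$. This is where the Mackey coset structure and the projective representation genuinely enter; once the restriction is identified, the Schur step is routine.
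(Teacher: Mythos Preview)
Your proof is correct and follows essentially the same line as the paper's: both use the right-coset decomposition of the induced space, note that the $A$-summands indexed by distinct cosets are pairwise inequivalent (so the intertwiner respects the decomposition), and then compare the restricted covariant pair on the identity-coset summand to recover the equivalence of the $W_{\omega_i}$. The only cosmetic difference is that you first invoke Lemma~\ref{Lem0} to reduce to $\pi_1=\pi_2$ and work on a single isotypic block, whereas the paper keeps both coset decompositions in play and reads off the appropriate block $V_{j1}$ of the resulting permutation matrix directly.
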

\begin{proof}
Let $\sH$ and $\sK$ be representation spaces for $(\overline{\pi_{\omega_1}},\overline{U_{\omega_1}})$ and $(\overline{\pi_{\omega_2}},\overline{U_{\omega_2}})$ respectively. Let $\{r_i\}$ be the set of right coset representatives of $G_{\pi_1}$ in $G$. Define $\sH_i=\{\xi\in\sH : \xi(t)=0 \mbox{ for all } t\not\in G_{\pi_1}r_i\}$, i.e. $\sH_i$ is the set of functions in $\sH$ supported on the coset $G_{\pi_1} r_i$. Then $\overline{\pi_{\omega_1}}_{|\sH_i}$ is equivalent to $\pi_{\omega_1}\circ\sigma_{r_i}$ for each $r_i$ and $\overline{\pi_{\omega_1}}$ decomposes as a direct sum of disjoint representations
\[\overline{\pi_{\omega_1}}=\underset{i}{\oplus}\pi_{\omega_1}\circ\sigma_{r_i}\]
Similarly, $\overline{\pi_{\omega_2}}=\underset{j}{\oplus}\pi_{\omega_2}\circ\sigma_{s_j}$, where $\{s_j\}$ is the set of right coset representatives of $G_{\pi_2}$ in $G$. Since $(\overline{\pi_{\omega_1}},\overline{U_{\omega_1}})$ is unitarily equivalent to $(\overline{\pi_{\omega_2}},\overline{U_{\omega_2}})$ there is a unitary $V$ such that $V\overline{\pi_{\omega_1}}=\overline{\pi_{\omega_2}}V$ and $V\overline{U_{\omega_1}}=\overline{U_{\omega_2}}V$. We can view $V$ as a matrix operator with respect to decomposition $\sH=\underset{i}{\oplus}\sH_i$ and $\sK=\underset{j}{\oplus}\sK_j$. Since $\{\pi_1\circ\sigma_{r_i}\}_i$ are mutually inequivalent representations and $\{\pi_2\circ\sigma_{s_j}\}_j$ are also mutually inequivalent, then $V$ is a permutation matrix whose nonzero entries are unitaries. Therefore, there exists a unitary $V_{j1}$ such that $V_{j1}\pi_{\omega_1}=(\pi_{\omega_2}\circ \sigma_{s_j}) V_{j1}$ for some $s_j$. It follows that $\pi_1$ is equivalent to $\pi_2 \circ \sigma_{s_j}$ and $G_{\pi_1}=s_j^{-1}G_{\pi_2}s_j.$ Observe that the restriction of $\overline{U_{\omega_1}}_{|\sH_1}$ to $G_{\pi_1}$ is equivalent to the representation $U_{\omega_1}$ and the restriction of $\overline{U_{\omega_2}}_{|\sK_1}$ to $G_{\pi_2}$ is equivalent to the representation $U_{\omega_2}$. Since $V\overline{U_{\omega_1}}=\overline{U_{\omega_2}}V$, then $V_{j1}\overline{U_{\omega_1}}_{|\sH_1}(r)=\overline{U_{\omega_2}}_{|\sK_j}(r) V_{j1}$ for all $r\in G_{\pi_1}$.  Also $\overline{U_{\omega_2}}_{|\sK_j}(s_j^{-1} t s_j)$ is equivalent to $\overline{U_{\omega_2}}_{|\sK_1}(t)$ for all $t\in G_{\pi_2}$. Therefore, $U_{\omega_1}(s_j^{-1} t s_j)$ is equivalent to $U_{\omega_2}(t)$ for all $t\in G_{\pi_2}$. It follows that $(\pi_1, W_{\omega_1})$ is equivalent to $(\pi_2\circ\sigma_{s_j}, s_j \cdot W_{\omega_2})$.

\end{proof}
Define a map $\Phi$ from $\widetilde{\Gamma}$ into the set of equivalence classes of irreducible covariant representations of $\ds$ by

\begin{equation}\label{eq:Pheta}
\Phi(\pi, W_{\omega})= (\overline{\pi_{\omega}},\overline{U_{\omega}}).
\end{equation}

If $(\pi_1, W_{\omega_1})$ is equivalent to $(\pi_2, W_{\omega_2})$, then  $\Phi(\pi_1, W_{\omega_1})$ is equivalent to $\Phi(\pi_2, W_{\omega_2})$. So $\Phi$ is well defined. The next result follows directly from Lemmas 1 and 2.

\begin{cor}\label{Cor1}
Let $\ds$ be a dynamical system, where $G$ is a finite group. Suppose that $(\pi_1, W_{\omega_1})$ and $(\pi_2, W_{\omega_2})\in \widetilde{\Gamma}$. Then $\Phi(\pi_1, W_{\omega_1})=\Phi(\pi_2, W_{\omega_2})$ if and only if
$(\pi_2, W_{\omega_2})= (\pi_1\circ\sigma_s, s \cdot W_{\omega_1})$ for some $s\in G$.
\end{cor}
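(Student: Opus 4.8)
The plan is to prove the two implications separately, drawing the forward direction from Lemma~\ref{Lem1} together with the invertibility of the $G$-action, and the backward direction from Lemma~\ref{Lem0}. Throughout I identify a pair with its equivalence class in $\widetilde{\Gamma}$, so that the relation ``$(\pi_2, W_{\omega_2}) = (\pi_1\circ\sigma_s, s\cdot W_{\omega_1})$'' means equality in $\widetilde{\Gamma}$, and I use that $\Phi$ is well defined on $\widetilde{\Gamma}$, hence respects this equivalence.

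For the forward implication, suppose $\Phi(\pi_1, W_{\omega_1}) = \Phi(\pi_2, W_{\omega_2})$. By the definition of $\Phi$ in Equation~\ref{eq:Pheta}, the induced representations $(\overline{\pi_{\omega_1}},\overline{U_{\omega_1}})$ and $(\overline{\pi_{\omega_2}},\overline{U_{\omega_2}})$ are unitarily equivalent, so Lemma~\ref{Lem1} furnishes an $s\in G$ with $(\pi_1, W_{\omega_1}) = (\pi_2\circ\sigma_s, s\cdot W_{\omega_2})$ in $\widetilde{\Gamma}$. I would then note that for each $s$ the assignment $T_s\colon(\pi, W_{\omega})\mapsto(\pi\circ\sigma_s, s\cdot W_{\omega})$ is a bijection of $\widetilde{\Gamma}$ with inverse $T_{s^{-1}}$: using only $\sigma_s\sigma_{s^{-1}}=\sigma_e$ and $(s\cdot W_{\omega})(s^{-1}ts)=W_{\omega}(t)$ one checks $T_{s^{-1}}T_s=\mathrm{id}$. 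Applying $T_{s^{-1}}$ to $(\pi_1, W_{\omega_1}) = T_s(\pi_2, W_{\omega_2})$ yields $(\pi_2, W_{\omega_2}) = (\pi_1\circ\sigma_{s^{-1}}, s^{-1}\cdot W_{\omega_1})$, which is the desired form after relabelling $s^{-1}$ as $s$.

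For the backward implication, suppose $(\pi_2, W_{\omega_2}) = (\pi_1\circ\sigma_s, s\cdot W_{\omega_1})$ in $\widetilde{\Gamma}$. Since $\Phi$ is well defined on $\widetilde{\Gamma}$, it suffices to prove $\Phi(\pi_1\circ\sigma_s, s\cdot W_{\omega_1}) = \Phi(\pi_1, W_{\omega_1})$. The key step is to identify the covariant representation of $(A, G_{\pi_1\circ\sigma_s}, \sigma)$ attached to $(\pi_1\circ\sigma_s, s\cdot W_{\omega_1})$ with the shifted representation appearing in Lemma~\ref{Lem0}. Writing $H=G_{\pi_1}$, so that $G_{\pi_1\circ\sigma_s}=s^{-1}Hs$, I would take for $\pi_1\circ\sigma_s$ the projective representation $r\mapsto V_1(srs^{-1})$ on $s^{-1}Hs$; a short computation from $V_1(g)\pi_1 V_1(g)^*=\pi_1\circ\sigma_g$ shows $V_1(srs^{-1})(\pi_1\circ\sigma_s)V_1(srs^{-1})^*=(\pi_1\circ\sigma_s)\circ\sigma_r$, so this is a legitimate choice. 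With it, Equation~\ref{eq:fingr} gives that the covariant representation attached to $(\pi_1\circ\sigma_s, s\cdot W_{\omega_1})$ is exactly $(\pi_{\omega_1}\circ\sigma_s, (U_{\omega_1})_s)$, where $(U_{\omega_1})_s(s^{-1}ts)=U_{\omega_1}(t)$; here the identity $(s\cdot W_{\omega_1})(s^{-1}ts)=W_{\omega_1}(t)$ is precisely what makes the $\overline{W}$-factors match. Lemma~\ref{Lem0}, applied to $H$ and $(\pi_{\omega_1}, U_{\omega_1})$, then says the induced representation of $(\pi_{\omega_1}\circ\sigma_s, (U_{\omega_1})_s)$ is unitarily equivalent to that of $(\pi_{\omega_1}, U_{\omega_1})$, i.e. $\Phi(\pi_1\circ\sigma_s, s\cdot W_{\omega_1}) = \Phi(\pi_1, W_{\omega_1})$, as required.

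I expect the main obstacle to be exactly this identification in the backward direction: one must check that the projective representation attached to the representative $\pi_1\circ\sigma_s$ may be chosen as $r\mapsto V_1(srs^{-1})$, so that the conjugated multiplier $\omega_2(r,r')=\omega_1(srs^{-1}, sr's^{-1})$ is the one for which $s\cdot W_{\omega_1}$ is a projective representation, and that with this choice the pair produced by Equation~\ref{eq:fingr} is literally the representation to which Lemma~\ref{Lem0} applies. Once the two covariant representations are matched on the nose, both implications follow directly from Lemmas~\ref{Lem0} and~\ref{Lem1}.
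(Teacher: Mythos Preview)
Your proposal is correct and follows exactly the approach the paper intends: the corollary is stated to follow directly from Lemmas~\ref{Lem0} and~\ref{Lem1}, and you have simply spelled out those two implications in detail, including the bookkeeping (inverting the $G$-action and matching the shifted covariant representation with the one in Lemma~\ref{Lem0}) that the paper leaves implicit.
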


\section{The Main Result}

The remaining step in obtaining our main result is to show that the map $\Phi$, as defined in Equation~\ref{eq:Pheta}, is surjective. We first need the following elementary lemma about projections.

\begin{lemma}\label{simpleLem}
Let $\sH$ be a Hilbert space and $\bA$ be a von Neumann algebra in $\sB(\sH)$. Let $p_1$ and $p_2$ be a pair of projections in $\bA$. Suppose that $q=p_1-(p_1\wedge p_2)$. Then $q\wedge p_2= 0$. Moreover, if $p_2$ is a minimal projection, then $(p_1\vee p_2)-p_1$ is a minimal projection in $\bA$.
\end{lemma}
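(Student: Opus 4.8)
The plan is to treat the two assertions separately: the first by a direct computation with ranges, and the second by reducing to a Murray--von Neumann equivalence with the minimal projection $p_2$.

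For the first assertion, since $p_1\wedge p_2\le p_1$ the element $q=p_1-(p_1\wedge p_2)$ is a projection whose range is the orthogonal complement of $\mathrm{ran}(p_1\wedge p_2)$ inside $\mathrm{ran}(p_1)$; in particular $\mathrm{ran}(q)\subseteq\mathrm{ran}(p_1)$ and $\mathrm{ran}(q)\perp\mathrm{ran}(p_1\wedge p_2)$. The meet $q\wedge p_2$ projects onto $\mathrm{ran}(q)\cap\mathrm{ran}(p_2)$. Any vector in this intersection lies in $\mathrm{ran}(p_1)\cap\mathrm{ran}(p_2)=\mathrm{ran}(p_1\wedge p_2)$ and simultaneously in $\mathrm{ran}(q)$, hence in $\mathrm{ran}(q)\cap\mathrm{ran}(p_1\wedge p_2)=\{0\}$. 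Thus $q\wedge p_2=0$. This step is routine.

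For the second assertion, I would first observe that since $p_1\wedge p_2$ is a subprojection of the minimal projection $p_2$, it must equal either $0$ or $p_2$. If $p_1\wedge p_2=p_2$, then $p_2\le p_1$, so $p_1\vee p_2=p_1$ and $(p_1\vee p_2)-p_1=0$; I would flag this degenerate case as the one excluded when a genuinely minimal projection is wanted. In the remaining case $p_1\wedge p_2=0$, so $q=p_1$ and the first part reads $q\wedge p_2=0$. I would then invoke the Kaplansky parallelogram law to get $(p_1\vee p_2)-p_1\sim p_2-(p_1\wedge p_2)=p_2$ in $\bA$, i.e.\ a partial isometry $v\in\bA$ with $v^*v=p_2$ and $vv^*=(p_1\vee p_2)-p_1$. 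Concretely this $v$ comes from the polar decomposition of $\bigl((p_1\vee p_2)-p_1\bigr)p_2$: the hypothesis $p_1\wedge p_2=0$ is exactly what forces the source projection to be $p_2$ (the kernel of this operator meets $\mathrm{ran}(p_2)$ only in $\{0\}$), while the final projection is $(p_1\vee p_2)-p_1$ because $\mathrm{ran}(p_1)\subseteq\ker\bigl((p_1\vee p_2)-p_1\bigr)$ makes the $p_1$-component drop out.

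Finally, I would argue that Murray--von Neumann equivalence preserves minimality. Writing $r=(p_1\vee p_2)-p_1=vv^*$ and $p_2=v^*v$, for any $x\in\bA$ one has $rxr=v(v^*xv)v^*$ with $v^*xv\in p_2\bA p_2$, so $r\bA r=v(p_2\bA p_2)v^*=v(\mathbb{C}p_2)v^*=\mathbb{C}r$, using that $p_2$ minimal means $p_2\bA p_2=\mathbb{C}p_2$. Hence $r$ is minimal. I expect the only genuine obstacle to be the parallelogram law together with the clean identification of the source and range projections of $v$; everything else is bookkeeping, and the one subtlety worth stating explicitly is the degenerate case $p_2\le p_1$.
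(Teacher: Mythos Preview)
Your argument for the first assertion is essentially the same as the paper's, phrased in terms of ranges rather than elements. For the second assertion your route is genuinely different: the paper argues directly by contradiction, assuming a proper nonzero subprojection $e'\lneq e=(p_1\vee p_2)-p_1$ exists in $\bA$ and then showing that the range projection of $p_2e'$ is a proper nonzero subprojection of $p_2$, contradicting minimality. Your use of the Kaplansky parallelogram law together with the observation that Murray--von Neumann equivalence preserves minimality (via the corner computation $r\bA r=v(p_2\bA p_2)v^*=\mathbb{C}r$) is more conceptual and arguably cleaner, and it makes explicit the degenerate case $p_2\le p_1$, which the paper passes over silently even though it never actually arises in the subsequent application. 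The paper's approach, on the other hand, is a bit more elementary in that it avoids invoking the parallelogram law and polar decomposition, relying only on range projections.
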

\begin{proof}
Suppose that $qh_1=p_2 h_2$ for some $h_1, h_2 \in \sH$. Since $q\leq p_1$, then $p_1p_2h_2=p_2h_2$. Hence, $(p_1\wedge p_2)h_2=p_2h_2$. It follows that $(p_1\wedge p_2)h_2=qh_1$. But $q\wedge (p_1\wedge p_2)=0$, so $qh_1=0$.

To prove the second part of the statement let $e=(p_1\vee p_2)-p_1$. Suppose there exists a nonzero projection $e'\in \bA$ such that $e'\lneq e$. Then $p_2e'\neq 0$ and $p_2e'\sH\subsetneq p_2\sH$. Let $p_2'$ be the projection onto the closure of the range of $p_2e'$. Then $p_2'\in\bA$ and $p_2'\lneq p_2$ which is a contradiction. It follows that $e$ is a minimal projection.
\end{proof}

Let $(\pi, U)$ be a covariant representation of $\ds$ on a Hilbert space $\sH$. There is a natural action of $G$ on the von Neumann algebra $\pi(A)'$ given by $T\mapsto U(s)TU(s)^*$ for all $T\in \pi(A)'$. We say that the action of $G$ on a von Neumann algebra $\bA$ is ergodic if the only elements of $\bA$ that are fixed by the group action are the scalar multiples of the identity operator. It was shown in \cite[Theorem 3.1]{Ar}, using a powerful result of \cite{L}, that von Neumann algebras which admit ergodic action by a finite group are necessarily finite dimensional. We present this result below with an alternative proof.

\begin{prop}\label{Lem2}
Let $U$ be a unitary representation of a finite group $G$ on a Hilbert space $\sH$. Suppose that $G$ acts ergodically on a von Neumann algebra $\bA$ in $\sB(\sH)$. Then there exists a finite family of minimal projections $p_i\in \bA$ such that $\oplus p_i=1_\sH$.
\end{prop}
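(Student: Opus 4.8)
The plan is to produce the required minimal projections in three stages: manufacture a canonical $G$-invariant state from an averaging operator, extract from it a quantitative lower bound on the ``size'' of every nonzero projection, and then assemble a single $G$-orbit of minimal projections into an orthogonal decomposition of $1_{\sH}$ using Lemma~\ref{simpleLem}. To begin, I would write $\alpha_s(T)=U(s)TU(s)^*$ for the action of $G$ on $\bA$ and introduce the averaging operator $E=\tfrac{1}{|G|}\sum_{s\in G}\alpha_s$. Since each $\alpha_s$ is a normal $*$-automorphism of $\bA$, the map $E$ is normal, positive and unital, and its range consists of $G$-fixed elements of $\bA$; by ergodicity these are exactly the scalars, so $E(T)=\tau(T)1_{\sH}$ defines a normal $G$-invariant state $\tau$ on $\bA$. (This $\tau$ is even faithful, since $E(T^*T)=0$ forces each nonnegative summand $\alpha_s(T^*T)$ to vanish, but faithfulness will not be needed.)

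The crucial point is the uniform lower bound $\tau(p)\ge 1/|G|$ for every nonzero projection $p\in\bA$. Indeed, retaining only the identity term $s=e$ in $|G|\,\tau(p)\,1_{\sH}=\sum_{s}\alpha_s(p)$ and discarding the remaining nonnegative terms gives $|G|\,\tau(p)\,1_{\sH}\ge p$, and evaluating this inequality at a unit vector in the range of $p$ yields $|G|\,\tau(p)\ge 1$. The immediate consequence is that any family of pairwise orthogonal nonzero projections in $\bA$ has at most $|G|$ members, because their $\tau$-values are each at least $1/|G|$ while summing to at most $\tau(1_{\sH})=1$. In particular $\bA$ admits no infinite strictly decreasing chain of nonzero projections, so every nonzero projection dominates a minimal projection of $\bA$. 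This is exactly the step that replaces the cited result of \cite{L} and forces $\bA$ to be atomic.

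Finally I would fix a minimal projection $p_0\in\bA$ and consider its orbit $\{\alpha_s(p_0):s\in G\}$, which is a finite set of minimal projections, minimality being preserved by the automorphisms $\alpha_s$. Its join $\bigvee_{s}\alpha_s(p_0)$ is $G$-invariant and nonzero, hence equals $1_{\sH}$ by ergodicity. To convert this join into an orthogonal sum I would apply Lemma~\ref{simpleLem} inductively: having expressed the join of the first $k$ orbit elements as an orthogonal sum $P_k=p_1+\cdots+p_k$ of minimal projections, I adjoin the next orbit element $q'$ by setting $p_{k+1}=(P_k\vee q')-P_k$, which is orthogonal to $P_k$ and, by the ``moreover'' clause of Lemma~\ref{simpleLem}, is either zero or minimal, while $P_{k+1}=P_k\vee q'$. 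After the finitely many orbit elements are exhausted one obtains $\sum_j p_j=1_{\sH}$ with each $p_j$ minimal, as required.

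I expect the quantitative estimate $\tau(p)\ge 1/|G|$ to be the main obstacle; once it and Lemma~\ref{simpleLem} are in hand, everything downstream (boundedness of orthogonal families, existence of minimal projections, and termination of the orthogonalization) is routine bookkeeping. The two subsidiary points requiring care are that the averaging operator genuinely lands in the scalars, which is precisely where ergodicity is used, and that minimality survives both the passage through the $\alpha_s$ and the subtraction in the inductive step.
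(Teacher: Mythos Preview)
Your argument is correct, and the closing orthogonalization via Lemma~\ref{simpleLem} is exactly what the paper does; the genuine difference lies in how you produce a minimal projection. The paper never introduces a state: it runs a recursive construction in the projection lattice, starting from an arbitrary projection $p$ together with a subset $S'\subseteq G$ satisfying $(\bigvee_{j<i}\alpha_{s_j}(p))\wedge\alpha_{s_i}(p)=0$, and shows that whenever $p$ is not minimal one can pass to a strictly smaller subprojection $p'$ while enlarging $S'$ by one element of $G\setminus S'$; finiteness of $G$ then forces termination at a minimal $p$. Your averaging device is shorter and more quantitative: the inequality $\tau(p)\ge 1/|G|$ bounds every orthogonal family of nonzero projections by $|G|$ members, which gives the descending chain condition and hence atomicity in one line, and incidentally shows that $\bA$ decomposes into at most $|G|$ minimal summands. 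The paper's approach, by contrast, keeps the role of finiteness of $G$ purely combinatorial (it is the bound on $|S'|$ that halts the recursion) and yields, as a byproduct of the construction, a specific subset $S\subseteq G$ for which the successive joins are strictly increasing, so that each $p_i$ in the final step is automatically nonzero. In your version you correctly allow $p_{k+1}=(P_k\vee q')-P_k$ to vanish when $q'\le P_k$; just note that Lemma~\ref{simpleLem} as stated in the paper asserts minimality rather than ``minimal or zero'', so the degenerate case should be discarded explicitly, as you indicate.
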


\begin{proof}
We will first show that there exists a minimal projection $p\in \bA$ together with a subset $S\subseteq G$ such that $\underset{s_j\in S}{\vee} U(s_j)pU(s_j)^* =1_\sH$ and $(\underset{j\leq i-1}{\vee} U(s_j)pU(s_j)^* )\wedge U(s_i)pU(s_i)^*=0$ for all $s_i\in S$. To this end, let $p\in\bA$ and $S'\subseteq G$ such that $(\underset{j\leq i-1}{\vee} U(s_j)pU(s_j)^* )\wedge U(s_i)pU(s_i)^*=0$ for all $s_i\in S'$. Suppose that $p$ is not a minimal projection. It will be enough to show that there is a projection $p'\in \bA$ and $t\in G-S'$ such that $(\underset{j\leq i-1}{\vee} U(s_j)p'U(s_j)^*) \wedge U(s_i)p'U(s_i)^*=0$ for all $s_i\in S$, where $S=S'\cup \{t\}$. Since $G$ is finite we will eventually obtain a minimal projection.

For each projection $q\in \bA$,  we have $\underset{G}{\sum} U(s)qU(s)^* \in \bA$. Moreover,
\[U(t) (\sum_G U(s)qU(s)^*) U(t)^*=\sum_G U(s)qU(s)^*\]
for all $t\in G$. Since the group action is ergodic, then $\underset{G}{\sum} U(s)qU(s)^*=c 1_\sH$ for some complex number $c$. It follows that
\begin{equation}\label{eq:T}
\underset{G}{\vee} U(s)qU(s)^*= 1_\sH
\end{equation}
for all non zero projections $q\in \bA$. Assume, without loss of generality, that $1_G\in S'$. Moreover, by replacing $p$ with a proper, nonzero subprojection we can assume that $\underset{s\in S'}{\vee} U(s)pU(s)^*<1_\sH$. By Equation~\ref{eq:T},
there is $t \in G$ such that $U(t)pU(t)^* \not\leq \underset{s\in S'}{\vee} U(s)pU(s)^*$. Note that $t\not\in S'$. Let $q=U(t)pU(t)^*-[U(t)pU(t)^*\wedge (\underset{s\in S'}{\vee} U(s)pU(s)^*)]$. By Lemma~\ref{simpleLem}, $q\wedge (\underset{s\in S'}{\vee} U(s)pU(s)^*)=0$. Then $p'=U(t)^*qU(t)$ is the desired projection.

We will now describe how to transform the set of minimal projections $\{U(s_i)pU(s_i)^*\}_{s_i\in S}$ obtained above into a set of orthogonal minimal projections. Let $q_i=U(s_i)pU(s_i)^*$ for all $s_i\in S$. For each $i\geq 2$, define

\[p_i=\underset{1\leq j \leq i}{\vee} q_j - \underset{1\leq j \leq i-1}{\vee} q_j\]
and $p_1=q_1$. Then $p_i \in \bA$ for all $i$, and $p_i \perp p_j$ for all $i\neq j$. Moreover, by the second part of Lemma~\ref{simpleLem}, each $p_i$ is a minimal projection.
\end{proof}

Suppose $(\pi, U)$ is an irreducible representation of $\ds$. Then the action of $G$ on $\pi(A)'$ is ergodic. Applying Proposition~\ref{Lem2} to the algebra $\pi(A)'$ we get that $\pi$ must decompose as a direct sum of finitely many irreducible representations. Let $\rho$ be an irreducible subrepresentation of $\pi$. It follows from \cite[Theorem 3.4]{Ar} that there exists an irreducible $\omega$-representation of $G_{\rho}$ such that $(\pi, U)$ is unitarily equivalent to $(\overline{\rho_{\omega}},\overline{U_{\omega}})$. It follows that the map $\Phi$, as defined in Equation~\ref{eq:Pheta}, is surjective. We are now in position to state our main theorem.

\begin{thm}\label{thm1}
Suppose that $\cp$ is a crossed product $C^*$-algebra, where $G$ is a finite group. Let $\widetilde{\Gamma} \backslash G$ be the set of orbits in $\widetilde{\Gamma}$ under the group action. Then there exists a bijective correspondence between $\widetilde{\Gamma} \backslash G$ and the dual space $\widehat{\cp}$.
\end{thm}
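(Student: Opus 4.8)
The plan is to assemble the pieces already established into a single bijection, via the standard dictionary between representations of the crossed product and covariant representations of the system. First I would invoke the one-to-one correspondence, recalled in the introduction, between nondegenerate representations of $\cp$ and covariant representations of $\ds$; since this correspondence preserves unitary equivalence and carries irreducible objects to irreducible objects, it identifies $\widehat{\cp}$ with the set of unitary equivalence classes of irreducible covariant representations of $\ds$. It therefore suffices to produce a bijection from $\widetilde{\Gamma}\backslash G$ onto this latter set.

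Next I would observe that the map $\Phi$ of Equation~\ref{eq:Pheta} already lands in the equivalence classes of irreducible covariant representations of $\ds$: by the discussion following Equation~\ref{eq:fingr}, if $W_{\omega}$ is an irreducible $\omega$-representation then $(\pi_{\omega}, U_{\omega})$ is irreducible, and the induced representation $(\overline{\pi_{\omega}}, \overline{U_{\omega}})$ has commutant isomorphic to that of $(\pi_{\omega}, U_{\omega})$, hence is again irreducible. The key step is then to show that $\Phi$ factors through the orbit space as an injection. This is precisely the content of Corollary~\ref{Cor1}: two elements of $\widetilde{\Gamma}$ have the same image under $\Phi$ if and only if they lie in the same $G$-orbit. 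Thus $\Phi$ is constant on orbits and separates distinct orbits, so it descends to a well-defined injective map $\widetilde{\Phi}\colon \widetilde{\Gamma}\backslash G \to \widehat{\cp}$.

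Finally, surjectivity of $\widetilde{\Phi}$ follows from the surjectivity of $\Phi$ itself, established immediately before the theorem using Proposition~\ref{Lem2} together with \cite[Theorem 3.4]{Ar}: an irreducible covariant representation $(\pi, U)$ induces an ergodic action of $G$ on $\pi(A)'$, which by Proposition~\ref{Lem2} is finite dimensional, forcing $\pi$ to split into finitely many irreducibles; any irreducible subrepresentation $\rho$ then yields an irreducible $\omega$-representation of $G_{\rho}$ whose induced representation recovers $(\pi, U)$. Combining injectivity and surjectivity of $\widetilde{\Phi}$ gives the asserted bijection between $\widetilde{\Gamma}\backslash G$ and $\widehat{\cp}$. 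I expect no genuine obstacle, since all substantive work is already contained in Corollary~\ref{Cor1} and in the surjectivity argument; the only point demanding care is checking that the covariant-representation-to-crossed-product dictionary respects irreducibility and equivalence class by class, so that the two dual spaces may be identified without loss of information.
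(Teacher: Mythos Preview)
Your proposal is correct and follows essentially the same approach as the paper: both invoke the canonical dictionary between $\widehat{\cp}$ and irreducible covariant representations, then use Corollary~\ref{Cor1} for injectivity on orbits and the argument preceding the theorem (Proposition~\ref{Lem2} together with \cite[Theorem 3.4]{Ar}) for surjectivity. Your write-up is in fact more explicit than the paper's in verifying that $\Phi$ lands in irreducibles and that the covariant-representation dictionary respects equivalence and irreducibility, but the underlying logic is identical.
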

\begin{proof}
Recall that there is a canonical correspondence between the irreducible representations of $\cp$ and $\ds$. By the preceding discussion the map $\Phi: \widetilde{\Gamma} \mapsto \widehat{\cp}$ is surjective. Moreover, by Corollary~\ref{Cor1}, $\Phi(\pi_1, W_{\omega_1})=\Phi(\pi_2, W_{\omega_2})$ if and only if $(\pi_2, W_{\omega_2})$ is in the orbit of $(\pi_1, W_{\omega_1})$.
\end{proof}

\end{document}